\definecolor{CadetBlue}{cmyk}{0.62, 0.57, 0.23, 0 }
\definecolor{black}{cmyk}{1, 0.5, 0, 0 }
\definecolor{RedViolet}{cmyk}{0.07, 0.9, 0, 0.34 }
\definecolor{SeaGreen}{cmyk}{0.69, 0, 0.5, 0}
\DeclareMathAlphabet{\mathpzc}{OT1}{pzc}{m}{it}
\newcommand{\R}{\mathbb R}
\newcommand{\N}{\mathbb N}
\newcommand{\Q}{\mathbb Q}
\newcommand{\T}{\mathbb T}
\newtheorem{theo}{Theorem}
\newtheorem{lemm}{Lemma}
\newtheorem{coro}{Corollary}
\theoremstyle{definition}
\theoremstyle{remark}
\newtheorem{note}{Note}
\title{Modular Invariant of Quantum Tori II: The Golden Mean}
\author{C. Casta\~{n}o Bernard \& T. M. Gendron}
\address{Centro de Estudios en F\'{i}sica y Matem\'{a}ticas B\'{a}sicas y Aplicadas, Universidad Aut\'{o}noma de Chiapas,
4a. Oriente Norte No. 1428, Colonia Barrio la Pimienta, Tuxtla Guti\'{e}rrez, Chiapas, M\'{e}xico }
\email{ccastanobernard@gmail.com}
\address{Instituto de Matem\'{a}ticas -- Unidad Cuernavaca, Universidad
Nacional Autonoma de M\'{e}xico, Av. Universidad S/N, C.P. 62210
Cuernavaca, Morelos, M\'{e}xico}
\email{tim@matcuer.unam.mx}
\date{31 January 2012}
\subjclass[2000]{11U10, 11F03, 14H52, 57R30, 11M55}
\keywords{}
\begin{document}
\vspace{2cm} \maketitle

\begin{abstract} In \cite{CBG} a modular invariant $j^{\rm qt}(\uptheta )$ of the quantum torus $\T(\uptheta )$ was defined.  In this paper,
we consider the case of $\uptheta=\upvarphi$ = the golden
mean.  We show that $j^{\rm qt}(\upvarphi )\approx 9538.249655644 $ by producing an explicit formula for it involving weighted
versions of the Rogers-Ramanujan functions.   
\end{abstract}
\tableofcontents

\section*{Introduction}

 In \cite{CBG}, using nonstandard methods, the following definition of modular invariant $j^{\rm qt}(\uptheta )$ of the quantum torus $\T(\uptheta )=\R/\langle 1,\uptheta\rangle$ was presented.  Let $\|\cdot \| =$ the distance-to-the-nearest-integer function and for each
 $\upvarepsilon >0$ let
 \[ B_{\upvarepsilon}(\uptheta ) = \big\{   n\in \N \big|\; \|n\uptheta\|<\upvarepsilon   \big\} .\]
Define
 \begin{align}\label{stdformulaepsilon} j_{\upvarepsilon}(\uptheta ) := \frac{12^{3}}{ 1-J_{\upvarepsilon}(\uptheta ) } ,& \quad J_{\upvarepsilon}(\uptheta ) :=  \frac{49}{40} \frac{\big( \sum_{n\in B_{\upvarepsilon}(\uptheta )}  n^{-6} \big)^{2}   }{\big( \sum_{n\in B_{\upvarepsilon}(\uptheta )}  n^{-4} \big)^{3}} 
 \end{align}
 and
 \[  j^{\rm qt}(\uptheta ) := \lim_{\upvarepsilon\rightarrow 0}  j_{\upvarepsilon}(\uptheta ) \]
 provided the limit exists; if not, we define $ j^{\rm qt}(\uptheta )=\infty$.    In \cite{CBG} it was shown that $ j^{\rm qt}(\uptheta )=\infty$ for all $\uptheta\in\Q$.

In this paper we study the case of $\uptheta = \upvarphi$ = the golden mean. We will show that $j^{\rm qt}(\upvarphi )\approx 9538.249655644 = 12^{3} \times 5.5198204025717$ by providing an {\it explicit formula} for $j^{\rm qt}(\upvarphi )$,
in which
\[ J^{\rm qt}(\upvarphi )=\frac{49}{40}\frac{ \big(G_{6}(\upvarphi)  + H_{6}(\upvarphi)\big)^{2}}{ \big(G_{4}(\upvarphi)  + H_{4}(\upvarphi) \big)^{3}}
\]
and where $G_{M}(x), H_{M}(x)$ are weighted variants of the Rogers-Ramanujan functions.  

For example, the function $G_{M}(x)$
is defined as follows: for each partition $I=i_{1}\leq i_{2}\leq \cdots \leq i_{k}$ of $n$, consider the weighting polynomial
$f_{I}(x) = x^{i_{1}}+\cdots + x^{i_{k}} $.
 Let $P(n)$ be the set of partitions of $n$ whose parts are distinct and whose differences
 are at least 2 and write
$C_{x,M}(n) :=x^{Mn}  \sum _{I\in P(n)}
  f_{I}(x )^{-M}$.
  Then
  \[  G_{M}(x)=\sum C_{x,M}(n) x^{n}.\]
  If one replaces $f_{I}(x)$ by the equiweight $x^{n}$ for all
    $I\in P(n)$, one recovers the variable part of the function appearing on the left hand side of the first Rogers-Ramanujan identity.

The classical Rogers-Ramanujan functions appear in Baxter's solution \cite{Bax} to the hard
hexagon model of statistical mechanics; in view of the quantum statistical mechanical
treatment of Complex Multiplication produced in \cite{CMR1}, \cite{CMR2}, it would seem not unreasonable
to ask that the weighted Rogers-Ramunujan functions or $j^{\rm qt}(\upvarphi)$ appear as partition
function or internal energy of some quantum statistical mechanical system.

\vspace{5mm} 

\noindent {\bf {\small Acknowledgments:}}  
This work was supported in part by the grants CONACyT 058537 and PAPIIT IN103708.

\section{The Golden Mean}\label{golden}

Let
\[  \upvarphi := \frac{1+\sqrt{5}}{2}\]
be the golden mean.
In this section we recall some facts about $\upvarphi$ and its diophantine approximations, see
for example \cite{Sch}, \cite{Vo}. 

The minimal polynomial of $\upvarphi$ is $X^{2}-X-1$
and $\upvarphi$ is a unit in $\Q (\sqrt{5})$, whose inverse is $-1$ times its conjugate:
\[   \upvarphi^{-1} = -\upvarphi' =\frac{\sqrt{5}-1}{2}.   \]
The discriminant of $\upvarphi$ is $\sqrt{5}$, and the class number of $\Q (\sqrt{5})$ is one.  
The pseudo lattice $\langle 1,\upvarphi\rangle$ has endomorphism ring equal to $O_{K}$, hence
has conductor $f=1$. 

If we denote by $[a_{0},a_{1},\dots ]$ the sequence of partial quotients of a real number $\uptheta$ then for
$\uptheta=\upvarphi$, $a_{i}=1$ for all $i$.
It follows that the sequence of best approximations $(p_{m},q_{m})$ of $\upvarphi$ is given by $(F_{m+1}, F_{m})$, where
 $\{F_{m}\}=\{1,1, 2,3,5,8,\dots \}$, $m\geq 1$, denotes the Fibonacci sequence:
\[ F_{m+1}= F_{m}+ F_{m-1}, \quad m\geq 1. \]
See for example \cite{Sch}.
This means that as $m\rightarrow\infty$,
\begin{equation}\label{errorterm}  \upvarepsilon_{m} := F_{m}\upvarphi -F_{m+1}\longrightarrow 0 
\end{equation}
 and that for all $0<n<F_{m}$, 
 \[  \|n\upvarphi \| > \| F_{m}\upvarphi  \| = | \upvarepsilon_{m}|, \]
 where as before $\|x\|$ is the distance of $x$ to the nearest integer.
 
 We recall Binet's formula \cite{NZM}:
 \[  F_{m}= \frac{\upvarphi^{m}-(\upvarphi')^{m}}{\sqrt{5}} = \frac{\upvarphi^{m}-(-1)^{m}\upvarphi^{-m}}{\sqrt{5}} 
 =\left\{\begin{array}{cc}
          \frac{\upvarphi^{m}-\upvarphi^{-m}}{\sqrt{5}}  & \text{if $m$ is even} \\
            \frac{\upvarphi^{m}+\upvarphi^{-m}}{\sqrt{5}}  & \text{if $m$ is odd.}
            \end{array}\right.
                        \]
 Using Binet's formula, we may obtain the following explicit expression for
 $\upvarepsilon_{m}$ of (\ref{errorterm}):
 \begin{align}\label{explicitformoferror} \upvarepsilon_{m}=(-1)^{m+1}\upvarphi^{-m}.
 \end{align}
 Indeed, for each integer $m$ we have
 \begin{align*}F_m\upvarphi - F_{m + 1}  & = 
 \left(\frac{\upvarphi^m + (-1)^{m + 1}\upvarphi^{-m}}{\sqrt{5}}\right)\upvarphi - \left(\frac{\upvarphi^{m + 1} + (-1)^{m}\upvarphi^{-m - 1}}{\sqrt{5}} \right)\\
 &  = 
 \frac{1}{\sqrt{5}}\left(\upvarphi^{m + 1} + (-1)^{m + 1}\upvarphi^{-m +  1} - \upvarphi^{m + 1} + (-1)^{m+1}\upvarphi^{-m -  1}\right)  \\
&  = 
 \frac{1}{\sqrt{5}}(\upvarphi + \upvarphi^{-1})(-1)^{m + 1}\upvarphi^{-m} = (-1)^{m + 1}\upvarphi^{-m}.
\end{align*}
Notice then that for $m\geq 2$, we have
\[  \|F_{m}\upvarphi\| = |\upvarepsilon_{m}|. \]
 For $m$ large, $\sqrt{5}F_{m}\approx\upvarphi^{m}$, with an error term = $\pm\upvarphi^{-m}$ that decays exponentially
 as $m\rightarrow\infty$.  

Finally, we recall Zeckendorf's representation (which is actually a special case of a more general
result of Ostrowski \cite{Os}):
\begin{theo}[Zeckendorf, \cite{Ze}]\label{Zeck}  Every natural number $n\in\N$ may be written uniquely as a sum of non-consecutive
Fibonacci numbers:
\[  n = F_{I} := F_{i_{1}}+\cdots + F_{i_{k}},\quad 2\leq i_{1}, i_{1}+2\leq i_{2},\dots , i_{k-1}+2\leq i_{k},\;\; 1\leq k.    \]
\end{theo}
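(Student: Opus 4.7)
The plan is to prove existence and uniqueness separately, both by induction on $n$, with the greedy algorithm producing a representation in the existence step and a sharp upper bound on admissible Fibonacci sums forcing uniqueness.

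For existence, I would argue by strong induction on $n\geq 1$. The base case is $n=1=F_{2}$, which is admissible. For $n\geq 2$, let $k$ be maximal with $F_{k}\leq n$; then $k\geq 3$ and by maximality $n<F_{k+1}=F_{k}+F_{k-1}$, so $0\leq n-F_{k}<F_{k-1}$. By the inductive hypothesis, $n-F_{k}$ admits a Zeckendorf decomposition (or is zero), and the bound $n-F_{k}<F_{k-1}$ forces every index used in that decomposition to be at most $k-2$; appending $F_{k}$ as the new top term therefore yields a representation of $n$ that still satisfies the non-consecutivity constraint $i_{s}+2\leq i_{s+1}$.

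For uniqueness the crux is the inequality
\[ F_{i_{1}}+F_{i_{2}}+\cdots+F_{i_{k}}<F_{i_{k}+1} \]
valid for every admissible sequence $2\leq i_{1}$, $i_{s}+2\leq i_{s+1}$. This is proved by a short induction on $k$: the base $k=1$ is immediate, and for the step the inductive hypothesis gives $F_{i_{1}}+\cdots+F_{i_{k-1}}<F_{i_{k-1}+1}\leq F_{i_{k}-1}$ thanks to the gap condition, so adding $F_{i_{k}}$ produces at most $F_{i_{k}-1}+F_{i_{k}}=F_{i_{k}+1}$. Given two admissible representations of the same $n$, cancel all common summands; in the remainder, let $i_{k}$ and $j_{l}$ be the two largest indices, which must differ since the surviving index sets are now disjoint. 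Assuming $i_{k}>j_{l}$, the right-hand side is strictly less than $F_{j_{l}+1}\leq F_{i_{k}}$, while the left-hand side is at least $F_{i_{k}}$, a contradiction. Hence the two original decompositions coincide.

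The only nontrivial step is the auxiliary inequality above, and even this is a direct consequence of the recurrence $F_{k+1}=F_{k}+F_{k-1}$ already recorded in the section; the remainder of the argument is routine bookkeeping on the indices, and no delicate arithmetic of $\upvarphi$ is required.
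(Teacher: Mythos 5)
The paper does not include a proof of this theorem; it records the statement and cites Zeckendorf's original article \cite{Ze}, so there is no in-text argument to compare against. Your proof is the standard and correct one: for existence, a greedy choice of the largest $F_{k}\leq n$ combined with the observation that $n-F_{k}<F_{k-1}$ forces all remaining indices below $k-1$; for uniqueness, the bound $F_{i_{1}}+\cdots+F_{i_{k}}<F_{i_{k}+1}$ for any admissible index set, proved by induction using the gap condition $i_{s}+2\leq i_{s+1}$ together with the Fibonacci recurrence. One small point worth making explicit: after cancelling common summands from two representations of $n$, either both remainders are empty (in which case the representations agreed all along) or both are nonempty, since they have equal sums and a nonempty admissible remainder has positive sum; only after that observation may you speak of the two maximal surviving indices $i_{k}$ and $j_{l}$. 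With that remark added, the argument is complete.
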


\begin{note}  The condition that $i_{1}\geq 2$ is to ensure uniqueness in the decomposition, otherwise the value $1$ could occur in two different ways, as $F_{1}$ or $F_{2}$.
\end{note}

\section{An Explicit Formula}

In this section we will produce, assuming that $j^{\rm qt}(\upvarphi )$ converges, an explicit formula for $\upvarphi $ obtained by evaluating at $\upvarphi $ a certain rational
expression involving weighted variants of the Rogers-Ramanujan functions.  The convergence of $j^{\rm qt}(\upvarphi )$ 
will be then proved in \S \ref{golden2}.

Recall the standard formula (\ref{stdformulaepsilon}).   Write $\upvarepsilon=|\upvarepsilon_{m}|$ and
$ B=B_{m}(\upvarphi )=\big\{ n\in\N\, |\; \| n\upvarphi \|< \upvarepsilon \big\}  $
so that
\[ J^{\rm qt}_{\upvarepsilon}(\upvarphi ) =  \frac{49}{40} \frac{\big( \sum_{n\in B}  n^{-6} \big)^{2}   }{\big( \sum_{n\in B}  n^{-4} \big)^{3}}. \]
The first step is to determine the elements of $B$ in terms of their Zeckendorf representations.
In what follows, for a multi-index $I=(i_{1},\dots, i_{k})$, define $|I|=k$.

\begin{lemm}\label{ZeckLemm}  Let $n =F_{I}=F_{i_{1}}+\cdots + F_{i_{k}}$ written in its unique Zeckendorf form.  Then
$n\in B$ if and only 
\begin{itemize}
\item[I.]  $|I|\geq 1$, $i_{1}\geq m+1$ or
\item[II.] $|I|\geq 2$, $i_{1}=m$ and $ i_{2}-m$ is odd.
\end{itemize}
\end{lemm}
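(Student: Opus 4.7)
The plan is to express $n\upvarphi$ modulo $\Z$ using Zeckendorf together with (\ref{explicitformoferror}): summing $F_{j}\upvarphi - F_{j+1} = \upvarepsilon_{j}$ over the decomposition $n = F_{i_{1}}+\cdots+F_{i_{k}}$ gives $n\upvarphi = N + S$ with $N\in\Z$ and $S = \sum_{\ell=1}^{k}\upvarepsilon_{i_{\ell}}$. So (once the identification $\|n\upvarphi\| = |S|$ has been justified) the condition $n\in B$ becomes $|S| < \upvarphi^{-m}$, and the proof reduces to an arithmetic estimate on $S$ in terms of the Zeckendorf data of $n$.

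The technical kernel is a \emph{leading-term-dominates} estimate for any Zeckendorf-type index set $j_{1}<j_{2}<\cdots<j_{k}$ (so $j_{\ell+1}\geq j_{\ell}+2$). The geometric bound
\[ \sum_{\ell=1}^{k}\upvarphi^{-j_{\ell}} \;<\; \frac{\upvarphi^{-j_{1}}}{1-\upvarphi^{-2}} \;=\; \upvarphi^{-(j_{1}-1)}, \]
which uses $1-\upvarphi^{-2} = \upvarphi^{-1}$, is strict because a finite Zeckendorf tail cannot saturate the geometric series. It gives $\bigl|\sum \upvarepsilon_{j_{\ell}}\bigr| < \upvarphi^{-(j_{1}-1)}$, and applied to the tail $\sum_{\ell\geq 2}\upvarepsilon_{j_{\ell}}$ it yields a bound $<\upvarphi^{-(j_{2}-1)} \leq \upvarphi^{-(j_{1}+1)} < |\upvarepsilon_{j_{1}}|$, so the total has the sign of $\upvarepsilon_{j_{1}}$.

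Applying this: in Case~I ($i_{1}\geq m+1$) one has $|S| < \upvarphi^{-(i_{1}-1)}\leq\upvarphi^{-m}$, placing $n\in B$. In Case~II ($i_{1}=m$, $i_{2}-m$ odd), split $S = \upvarepsilon_{m}+T$; the sign of $T$ equals that of $\upvarepsilon_{i_{2}}$, which is opposite to $\upvarepsilon_{m}$ because $i_{2}$ and $m$ have opposite parity, so $|S| = \upvarphi^{-m}-|T| < \upvarphi^{-m}$ and $n\in B$. For the converse, when $i_{1}=m$ with $|I|=1$ one has $|S|=\upvarphi^{-m}$; when $i_{1}=m$ with $i_{2}-m$ even, $T$ has the same sign as $\upvarepsilon_{m}$, yielding $|S|>\upvarphi^{-m}$; and when $i_{1}\leq m-1$, a short case-split on the parity of $i_{2}-i_{1}$ combines the magnitude $|\upvarepsilon_{i_{1}}|\geq\upvarphi^{-(m-1)}$ with the tail bound to force $|S|>\upvarphi^{-m}$.

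I expect the main obstacle to be the boundary subcase $i_{1}=m-1$, $i_{2}\geq m+2$: here the naive triangle-inequality bound gives exactly $|\upvarepsilon_{i_{1}}|-\upvarphi^{-(i_{2}-1)}\geq\upvarphi^{-(m-1)}-\upvarphi^{-(m+1)}=\upvarphi^{-m}$, and strictness must be extracted from the strict geometric estimate for finite sums above. A secondary bookkeeping point is that when $i_{1}=2$, $|S|$ can exceed $\tfrac{1}{2}$; since $|S|<\upvarphi^{-1}$ always, one then has $\|n\upvarphi\| = 1-|S| > 1-\upvarphi^{-1} = \upvarphi^{-2}\geq\upvarphi^{-m}$ for $m\geq 2$, preserving the non-membership conclusion.
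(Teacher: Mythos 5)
Your proposal is correct and follows essentially the same route as the paper: both translate membership in $B$ into the inequality $|S| < \upvarphi^{-m}$ for $S = \sum_{\ell}\upvarepsilon_{i_{\ell}}$, bound the tail by the geometric estimate $\sum\upvarphi^{-i_{\ell}} < \upvarphi^{-(i_{1}-1)}$ (via $1-\upvarphi^{-2}=\upvarphi^{-1}$), and use the parity of the leading indices to decide reinforcement versus cancellation. Your leading-term-dominates lemma and the parity case split in the converse $i_{1}\leq m-1$ direction are simply a cleaner packaging of the paper's extremal-configuration argument, and both proofs confront the same boundary subtleties (strictness at $i_{1}=m-1$, and the $i_{1}=2$ wrap-around past $1/2$ handled via $m>2$).
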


\begin{note} Since the Zeckendorf form consists of sums of nonconsecutive 
Fibonacci numbers, we must have that  $ i_{2}-m\geq 3$
in II. 
\end{note}

\begin{proof}  First note that we have trivially by (\ref{explicitformoferror}) that $F_{m+i}\in B$ for $i\geq 1$. Suppose that $n=F_{I}$ is a sum of more than one non-consecutive Fibonacci numbers
and $i_{1}\geq m+1$.  Then we have
\[     \|n\upvarphi\| < \upvarphi^{-(m+1)} + \upvarphi^{-(m+3)}+\cdots = \upvarphi^{-(m+1)}(1-\upvarphi^{-2})^{-1}. \]
Since $\upvarphi =\upvarphi^{2}-1$ it follows that $(1-\upvarphi^{-2})^{-1}=\upvarphi$.   Then $ \|n\upvarphi\|<\upvarphi^{-m}$ which
implies that $n\in B$.  Thus every element of the type described in I. belongs to $B$.
On the other hand, if $i_{1}\leq m-1$, then we claim that 
\[ \upvarphi^{-m} =\upvarepsilon< \|n\upvarphi\|< 1-\upvarphi^{-1} = \upvarphi^{-2}.\] 
Indeed, if $n=F_{I}$, the associated error term sum 
\[ \upvarepsilon_{I} := \pm \upvarepsilon_{i_{1}} \pm \cdots \pm \upvarepsilon_{i_{k}}\]
is minimized in absolute value by taking $i_{1}=m-1$ and assuming
that the remaining indices $i_{2},\dots$ are such that the signs of the associated error terms $\upvarepsilon_{i_{2}},\dots $ are different from the sign
of the error term $\upvarepsilon_{m-1}$.  More precisely,
\[   |\upvarepsilon_{I}| > \upvarphi^{-(m-1)} - (\upvarphi^{-(m+2)} + \upvarphi^{-(m+4)} +\cdots ) = 
\upvarphi^{-m}(\upvarphi -\upvarphi^{-2}(1-\upvarphi^{-2})^{-1}) . \]
Since $\upvarphi^{-2}(1-\upvarphi^{-2})^{-1}=\upvarphi^{-1}$ and $\upvarphi-\upvarphi^{-1}=1$, it follows that
$ |\upvarepsilon_{I}|>\upvarphi^{-m} =\upvarepsilon$.
In addition $|\upvarepsilon_{I}|$ is maximized by taking $i_{1}=2$, $i_{2}=4,\dots$, so that
\[ |\upvarepsilon_{I}| < \upvarphi^{-2} + \upvarphi^{-4}+\cdots = \frac{1}{\upvarphi^{2}-1} = \upvarphi^{-1}.  \]
Note that the distance of the latter bound $\upvarphi^{-1}$ to the nearest integer is $1-\upvarphi^{-1}=\upvarphi^{-2}$.
It follows then from the definition of $\| \cdot \|$ and the fact that we are assuming that $m>2$ that
$ \|n\upvarphi\|>\upvarphi^{-m} =\upvarepsilon$ and
$n\not\in B$.
Now if $i_{1}=m$ and $i_{2}-m$ is even, then the error terms $\upvarepsilon_{m}$ and
$\upvarepsilon_{i_{2}}$ share the same sign, and we have 
\[   \|n\upvarphi\|> \upvarphi^{-m} + \upvarphi^{-i_{2}} - \left(\upvarphi^{-(i_{2} +3)} + \upvarphi^{-(i_{2} +5)} + \cdots \right) 
= \upvarepsilon + (\upvarphi^{-i_{2}} - \upvarphi^{-(i_{2} +3)}(1-\upvarphi^{-2})^{-1}) >\upvarepsilon 
\]
Indeed, the last inequality follows since 
\begin{align*} 
 & \upvarphi^{-i_{2}} - \upvarphi^{-(i_{2} + 3)}(1-\upvarphi^{-2})^{-1} =  \\
 & \upvarphi^{-i_{2}}(1 - \upvarphi^{-3}(1 - \upvarphi^{-2})^{-1}) = \\
 & \upvarphi^{-i_{2}}(1 - \upvarphi^{-2}(\upvarphi - \upvarphi^{-1})^{-1}) = \\
 & \upvarphi^{-i_{2}}(1-\upvarphi^{-2}) >0.
\end{align*}
On the other hand, if $i_{1}=m$ and $i_{2}=m+k$, $k$ odd, then
the sign of the corresponding error terms differ, and we have
\begin{align*}  \|n\upvarphi\| & < \upvarphi^{-m}-\upvarphi^{-m-k}+\upvarphi^{-m-k-3}+\upvarphi^{-m-k-5}+\cdots  \\
 & = \upvarphi^{-m} - \upvarphi^{-m-k}\left( 1- (\upvarphi^{-3}+\upvarphi^{-5}+\cdots )   \right) \\
 & =  \upvarphi^{-m} - \upvarphi^{-m-k}\left( 1-\upvarphi^{-3}(1-\upvarphi^{-2})^{-1}\right) \\
& =  \upvarphi^{-m} - \upvarphi^{-m-k}\left( 1-\upvarphi^{-2}\right) <\upvarepsilon 
\end{align*}
so that $n\in B$.
\end{proof}

    Let $\mathfrak{N} $ be the set of increasing, finite tuples $I=(i_{1},\dots , i_{l})$ of natural numbers with $|I|=l\geq 2$ and
which are not consecutive i.e. $ i_{1}+2\leq i_{2}, \dots , i_{l-1}+2\leq i_{l}$.  Denote
by 
\begin{align}\label{defNm}
 \mathfrak{N}(m) & = \{ I=(i_{1},\dots , i_{l})\in \mathfrak{N}|\; i_{1}\geq m\}.
 \end{align} 
Also denote by 
\begin{align}\label{defMm} 
\mathfrak{M}(m) & = \{ I\in \mathfrak{N}(m)|\; i_{1}=m \text{ and } i_{2}=m+k \text{ for }
k \text{ odd}\}.
\end{align}

Consider $B_{m}$ for $m>2$.  Then by the Lemma we have  
\[     J^{\rm qt}_{B_{m}} (\upvarphi ) : = \frac{49}{40}\frac{ \bigg(  \sum_{n\in B_{m}} n^{-6}  \bigg)^{2}  }{\bigg(\sum_{n\in B_{m}} n^{-4}  \bigg)^{3}} = \frac{49}{40}\frac{  \bigg( \sum_{i=1}^{\infty}  F_{m+i}^{-6}  +   \sum_{I\in\mathfrak{N}(m+1)}  F_{I}^{-6}  + 
 \sum_{ I\in \mathfrak{M}(m)} F_{I}^{-6} \bigg)^{2} }
 {  \bigg( \sum_{i=1}^{\infty}  F_{m+i}^{-4}  +   \sum_{I\in\mathfrak{N}(m+1)}  F_{I}^{-4}  + 
 \sum_{ I\in \mathfrak{M}(m)} F_{I}^{-4} \bigg)^{3}}  , \]
 an expression whose status is still only formal.  Consider also the formal expression
 \begin{align}\label{limitoftheJs}
  J^{\rm qt}(\upvarphi ) & := \frac{49}{40}\frac{ \bigg( \sum_{i= 1}^{\infty}  \upvarphi^{-6i}  +   \sum_{I\in\mathfrak{N}(1)}   \upvarphi_{I}  ^{-6}  + 
 \sum_{ I\in \mathfrak{M}(0)} \upvarphi_{I}^{-6} \bigg)^{2} }
 { \bigg( \sum_{i=1}^{\infty} \upvarphi^{-4i}   +   \sum_{I\in\mathfrak{N}(1)}   \upvarphi_{I}  ^{-4}  + 
 \sum_{ I\in \mathfrak{M}(0)} \upvarphi_{I}^{-4} \bigg)^{3}}    
 \end{align}
 where 
 \[   \upvarphi_{I} := \upvarphi^{i_{1}} +\cdots +  \upvarphi^{i_{l}} .\]

\begin{theo}\label{limitgoldenmean} If $J^{\rm qt}(\upvarphi )$ converges then so does $J^{\rm qt}_{B_{m}}(\upvarphi )$ for each $m$ and
 \[    J^{\rm qt}_{B_{m}}(\upvarphi ) \longrightarrow J^{\rm qt}(\upvarphi )\]
 as $m\rightarrow\infty$.
 \end{theo}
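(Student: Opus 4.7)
The plan is to rewrite each sum $\sum_{n \in B_m} n^{-M}$ via Binet's formula, perform an index shift $i_a \mapsto i_a - m$, and factor out a common scaling $\upvarphi^{-Mm}$ whose overall contribution cancels between the squared numerator ($M = 6$) and cubed denominator ($M = 4$) of $J^{\rm qt}_{B_m}(\upvarphi)$.

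The first step is to invoke Binet's formula termwise on $F_I = \sum_{a=1}^{l} F_{i_a}$ to obtain
\[
F_I = \frac{\upvarphi_I}{\sqrt{5}}\,(1 + \delta_I), \qquad |\delta_I| \leq \upvarphi^{\,1 - 2i_1}.
\]
Here the bound follows from the estimate $|\sum_a (-1)^{i_a}\upvarphi^{-i_a}| \leq \upvarphi^{-i_1}(1 - \upvarphi^{-2})^{-1} = \upvarphi^{\,1 - i_1}$ (using $i_{a+1} - i_a \geq 2$), together with the lower bound $\upvarphi_I \geq \upvarphi^{i_1}$. For all three families of $I$ enumerated by Lemma \ref{ZeckLemm} — namely singletons $(m+i)$ with $i \geq 1$, tuples in $\mathfrak{N}(m+1)$, and tuples in $\mathfrak{M}(m)$ — we have $i_1 \geq m$, and so the error is uniformly controlled by $|\delta_I| \leq \upvarphi^{\,1 - 2m}$.

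Next I would apply the translation $(i_1, \ldots, i_l) \mapsto (i_1 - m, \ldots, i_l - m)$, which gives bijections $\mathfrak{N}(m+1) \to \mathfrak{N}(1)$ and $\mathfrak{M}(m) \to \mathfrak{M}(0)$ and sends $\upvarphi_I$ to $\upvarphi^{-m}\upvarphi_{I'}$. Combining this with the Binet expansion yields, for $M \in \{4, 6\}$,
\[
\sum_{n \in B_m} n^{-M} \;=\; 5^{M/2}\,\upvarphi^{-Mm} \cdot S_M(m),
\]
where $S_M(m)$ is the reindexed sum whose summands include the error factors $(1+\delta)^{-M}$. The squared numerator of $J^{\rm qt}_{B_m}(\upvarphi)$ (from $M=6$) and the cubed denominator (from $M=4$) both acquire the common prefactor $5^{6}\,\upvarphi^{-12m}$, which cancels, leaving
\[
J^{\rm qt}_{B_m}(\upvarphi) \;=\; \frac{49}{40} \cdot \frac{S_6(m)^2}{S_4(m)^3}.
\]

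To conclude, I would let $m \to \infty$. Since $(1+\delta_I)^{-M} = 1 + O(\upvarphi^{-2m})$ uniformly in $I$, and since the limiting sums $S_4, S_6$ obtained by replacing every $(1+\delta_I)$ with $1$ are precisely the Fibonacci-free expressions in (\ref{limitoftheJs}), dominated convergence gives $S_M(m) \to S_M$, and hence $J^{\rm qt}_{B_m}(\upvarphi) \to J^{\rm qt}(\upvarphi)$. The main obstacle is justifying dominated convergence rigorously: one needs absolute convergence of $\sum_{J \in \mathfrak{N}(1)} \upvarphi_J^{-M}$ and $\sum_{J \in \mathfrak{M}(0)} \upvarphi_J^{-M}$ for $M \in \{4,6\}$, which should follow from the hypothesis that $J^{\rm qt}(\upvarphi)$ converges, combined with the trivial domination $\upvarphi_J^{-M} \leq \upvarphi^{-M j_l}$ and the fact that the number of admissible $J$ with prescribed largest entry $j_l = N$ grows only like $\upvarphi^{N}$, so the underlying series is absolutely convergent for any $M > 1$.
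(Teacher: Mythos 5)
Your proof is correct and follows essentially the same route as the paper's: normalize each term by the $m$-dependent scaling coming from Binet's formula (the paper multiplies by $F_m^{12}$, you factor out $\upvarphi^{-12m}$ — the same up to the factors of $\sqrt{5}$ and $1+O(\upvarphi^{-2m})$ that both arguments track), shift indices so that the sums range over the fixed sets $\mathfrak{N}(1)$ and $\mathfrak{M}(0)$, and observe that the residual factor on each term tends to $1$ uniformly at rate $\upvarphi^{-2m}$. The only cosmetic difference is in the final passage to the limit: where you invoke dominated convergence for the normalized sums $S_M(m)\to S_M$, the paper instead sandwiches every term between $C_m^{\mp M}$ times the limiting term with $C_m=(1+\upvarphi^{-2m})/(1-\upvarphi^{-2m})$, yielding the explicit squeeze $C_m^{-24}\,J^{\rm qt}(\upvarphi)<J^{\rm qt}_{B_m}(\upvarphi)<C_m^{24}\,J^{\rm qt}(\upvarphi)$ and hence a quantitative rate; your version trades that rate for a slightly cleaner bookkeeping of the error $\delta_I$.
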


 \begin{proof}
 Multiply the numerator and denominator of $J^{\rm qt}_{B_{m}}(\upvarphi )$ by $F^{12}_{m}$
to obtain
\begin{align}\label{FibNormOfJ} J^{\rm qt}_{B_{m}}(\upvarphi ) & = \frac{49}{40}\frac{  \bigg( \sum_{i=1}^{\infty}  (F_{m}/F_{m+i})^{6}  +   \sum_{I\in\mathfrak{N}(m+1)}  (F_{m}/F_{I})^{6}  + 
 \sum_{ I\in \mathfrak{M}(m)} (F_{m}/F_{I})^{6} \bigg)^{2} }
 {  \bigg( \sum_{i=1}^{\infty}  (F_{m}/F_{m+i})^{4}  +   \sum_{I\in\mathfrak{N}(m+1)}  (F_{m}/F_{I})^{4}  + 
 \sum_{ I\in \mathfrak{M}(m)} (F_{m}/F_{I})^{4}\bigg)^{3}}  .  
 \end{align}

It will suffice to show that each term $T^{-6}_{m}=T^{-6}_{m,I}$  ($T^{-4}_{m}=T^{-4}_{m,I}$) appearing in a sum contained in the numerator (denominator) of (\ref{FibNormOfJ})
satisfies 
\[ C_{m}^{-6} \cdot T^{-6} < T^{-6}_{m} <C_{m}^{6} \cdot T^{-6}  \quad \bigg(C_{m}^{-4} \cdot T^{-4} < T^{-4}_{m} <C_{m}^{4} \cdot T^{-4}  \bigg)   \]
where $T=T_{I}$ is the correspondingly indexed term of $J^{\rm qt}(\upvarphi )$ and
\[  C_{m} = \frac{1+\upvarphi^{-2m}}{1-\upvarphi^{-2m}}.\]
This will give convergence of each $J^{\rm qt}_{B_{m}}(\upvarphi )$, as well as the bound
\[   \left( \frac{1-\upvarphi^{-2m}}{1+\upvarphi^{-2m}}\right)^{24}J^{\rm qt}(\upvarphi )<  J^{\rm qt}_{B_{m}}(\upvarphi ) < \left( \frac{1+\upvarphi^{-2m}}{1-\upvarphi^{-2m}}\right)^{24}J^{\rm qt}(\upvarphi ) ,  \]
which implies that $J^{\rm qt}_{B_{m}}(\upvarphi ) \rightarrow J^{\rm qt}(\upvarphi )$.

We will now make use of Binet's formula, $\sqrt{5}F_{m}=(\upvarphi^{m}\pm\upvarphi^{-m})$.  
Note that
the $\sqrt{5}$ factors drop out and so we may simply replace every Fibonacci term $F_{m}$ appearing by
$\upvarphi^{m}\pm\upvarphi^{-m}$.

We consider first the numerator of (\ref{FibNormOfJ}), treating each of the three sums there separately.  The
first sum may be written \[   \sum_{i=1}^{\infty}  (F_{m}/F_{m+i})^{6} = \sum_{i=1}^{\infty}  \left( \frac{\upvarphi^{m} \pm\upvarphi^{-m}}{ \upvarphi^{m+i} \pm(-1)^{i} \upvarphi^{-(m+i)}}  \right)^{6} 
 =\sum_{i=1}^{\infty} \upvarphi^{-6i}  \left( \frac{1 \pm\upvarphi^{-2m}}{1 \pm(-1)^{i}  \upvarphi^{-2m-2i}}  \right)^{6} .        \]
Note that
 \[  \left( \frac{1-\upvarphi^{-2m}}{1+\upvarphi^{-2m}}\right)^{6} <   \left( \frac{1 \pm\upvarphi^{-2m}}{1 \pm(-1)^{i}  \upvarphi^{-2m-2i}}  \right)^{6} <
\left( \frac{1+\upvarphi^{-2m}}{1-\upvarphi^{-2m}}\right)^{6}  .  \]
The next sum is 
\begin{align}\label{BinetNormOfJ}
     \sum_{I\in\mathfrak{N}(m+1)}   \left( F_{m}/ F_{I}\right)^{6} & = \sum_{I\in\mathfrak{N}(m+1)}   \left(\frac{ \upvarphi^{m}\pm\upvarphi^{-m} }{ (\upvarphi^{m+i_{1}}\pm \upvarphi^{-m-i_{1}} )+\cdots + (\upvarphi^{m+i_{k}}\pm\upvarphi^{-m-i_{k}})  } \right)^{6}, 
     \end{align}
where we are writing our generic $I\in\mathfrak{N}(m+1)$ in the form $I=(i_{1}+m,\dots ,i_{k}+m)$ with $1\leq i_{1}<i_{2}<\cdots <i_{k}$. 
Letting $I_{0}=(i_{1},\dots , i_{k})$ then each term of the sum in (\ref{BinetNormOfJ}) may be re-written
\begin{align}\label{rewritingtheterms}  \left( \frac{1\pm\upvarphi^{-2m}}{ \upvarphi_{I_{0}} + (\pm\upvarphi_{-I_{0}-2m}) }\right)^{6}
=   \upvarphi_{I_{0}}^{-6} \cdot\left( \frac{1\pm\upvarphi^{-2m}}{1 + (\pm \upvarphi_{-I_{0}-2m})/  \upvarphi_{I_{0}}  }\right)^{6}
 \end{align}
 where 
 \[\pm\upvarphi_{-I_{0}-2m} :=\pm \upvarphi^{-i_{1}-2m}\pm \cdots\pm  \upvarphi^{-i_{k}-2m}, \]
 the signs determined as in Binet's formula by the parities of the powers.
It is easy to see that
 \begin{align}\label{basicinequalities}
   \left( \frac{1-\upvarphi^{-2m}}{1+\upvarphi^{-2m}}\right)^{6}
   < \left( \frac{1\pm\upvarphi^{-2m}}{1 + (\pm \upvarphi_{-I_{0}-2m})/  \upvarphi_{I_{0}}  }\right)^{6}< \left( \frac{1+\upvarphi^{-2m}}{1-\upvarphi^{-2m}}\right)^{6}:
   \end{align}
indeed, both inequalities in (\ref{basicinequalities}) follow since
 \[  \upvarphi^{-2m} > (\pm \upvarphi_{-I_{0}-2m})/  \upvarphi_{I_{0}} >-\upvarphi^{-2m},\]
 true as
 \begin{align}\label{trueas}  (\pm \upvarphi_{-I_{0}-2m})/\upvarphi_{I_{0}} = \upvarphi^{-2m}
\left( \frac{\pm\upvarphi^{-i_{1}}\pm \cdots\pm  \upvarphi^{-i_{k}} }{\upvarphi^{i_{1}}+ \cdots+  \upvarphi^{i_{k}}}\right).
  \end{align}
 What remains is the sum over $\mathfrak{M}(m)$: the analysis here is essentially the same as that made
 for the sum over $\mathfrak{N}(m+1)$, only we take into account that $I=(m,m+j,m+i_{3},\dots ,m+i_{k})$
 where $j$ is odd.  Writing $I_{0}=(0,j,i_{3},\dots ,i_{k})$, then we have the equation (\ref{rewritingtheterms}) with
 \[  \pm\upvarphi_{-I_{0}-2m} =\pm\upvarphi^{-2m}\mp\upvarphi^{-j-2m} \pm\cdots\pm  \upvarphi^{-i_{k}-2m}, \]
where the $\mp$ sign of $\upvarphi^{-j-2m}$ indicates that this sign is opposite to that of $\upvarphi^{-2m}$, as  
 $j$ is odd.  The analogue of (\ref{trueas}) is then
 \begin{align*} (\pm \upvarphi_{-I_{0}-2m})/\upvarphi_{I_{0}} = \upvarphi^{-2m}
\left( \frac{\pm1\mp\upvarphi^{-j}\pm\cdots\pm  \upvarphi^{-i_{k}} }{1 +\upvarphi^{j}+ \cdots+  \upvarphi^{i_{k}}}\right),
  \end{align*}
  which yields the analogue of (\ref{basicinequalities}) in this case.  This completes our bounding of
  the numerator.  Analogous bounds, with the exponent $6$ replaced by $4$, may be found for the corresponding sums in the denominator of $J^{\rm qt}_{B_{m}}$.
 The result now follows.
 \end{proof}
 
Let $P(n)$ be the set of partitions of $n$ into
  into distinct parts whose differences are at least $2$, and let $c(n)=|P(n)|$. 
   The generating function
  \[  F(x)=\sum c(n)x^{n}=\sum \frac{x^{n^{2}}}{(1-x)\cdots (1-x^{n})}\] 
 is of substantial combinatorial interest: it is the left-hand side of the first Rogers-Ramanujan identity \cite{HaWr}.
 
For each partition $I\in P(n)$, let $f_{I}(x)=x^{i_{1}}+\cdots +x^{i_{k}}$
be the associated weighting polynomial.
Define
 \[ C_{x,M}(n)=x^{Mn} \sum _{I\in P(n) }
  f_{I}(x)^{-M}.\]
  Considering the generating function
  \[  G_{M}(x)=\sum C_{x,M}(n) x^{n}.\]
  Clearly we have 
  \[   G_{M}(\upvarphi) =  \sum_{i= 1}^{\infty}  \upvarphi^{-Mi}  +   \sum_{I\in\mathfrak{N}(1)}   \upvarphi_{I}  ^{-M} . \]
  Similarly, let $Q(n)\subset P(n)$ 
be the set of those partitions $I=i_{1}<i_{2}<\cdots <i_{k}$ in $P(n)$ for which $i_{1}$ is odd and $\geq 3$.
Let 
\[ D_{x,M}(n) :=x^{Mn}  \sum _{I\in Q(n)}(1+f_{I}(x ))^{-M}\]
and define \[  H_{M}(x):=\sum D_{x,M}(n) x^{n}.\]
Then 
 \[   H_{M}(\upvarphi) =  \sum_{ I\in \mathfrak{M}(0)} \upvarphi_{I}^{-M}.\]
 The following is then immediate:

\begin{coro}\label{explicitformula}  Let $J^{\rm qt}(\upvarphi )$ be as above.  Then
\[ J^{\rm qt}(\upvarphi )=\frac{49}{40}\frac{ \big(G_{6}(\upvarphi)  + H_{6}(\upvarphi)\big)^{2}}{ \big(G_{4}(\upvarphi)  + H_{4}(\upvarphi) \big)^{3}} .
\] 
\end{coro}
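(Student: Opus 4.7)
The corollary is essentially a bookkeeping exercise: I need to recognize each of the three sums in the explicit expression (\ref{limitoftheJs}) for $J^{\rm qt}(\upvarphi)$ as a piece of either $G_M(\upvarphi)$ or $H_M(\upvarphi)$, then substitute. No new analytic content is required.

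First, I would verify the identity $G_M(\upvarphi) = \sum_{i \geq 1} \upvarphi^{-Mi} + \sum_{I \in \mathfrak{N}(1)} \upvarphi_I^{-M}$. Unpacking the definition, $G_M(\upvarphi)$ collects the values $f_I(\upvarphi)^{-M} = \upvarphi_I^{-M}$ as $I$ ranges over all Rogers--Ramanujan partitions in $\bigcup_n P(n)$. I would split this aggregate by the length of $I$: single-part partitions $(i)$ contribute $\upvarphi^{-Mi}$ and account for the first sum, while multi-part partitions $(i_1,\dots,i_k)$ with $k \geq 2$ are precisely the tuples of $\mathfrak{N}(1)$—increasing, non-consecutive, length at least two, with first entry at least $1$—and give the second sum by the very definition of $\upvarphi_I$.

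Next, for the identity $H_M(\upvarphi) = \sum_{I \in \mathfrak{M}(0)} \upvarphi_I^{-M}$, I would exhibit the natural bijection obtained by prepending a zero: a partition $I' = (i_1', \dots, i_k') \in Q(n)$ with $i_1'$ odd and $\geq 3$ corresponds to $I = (0, i_1', \dots, i_k') \in \mathfrak{M}(0)$, since the gap $i_2 - i_1 = i_1' - 0 = i_1'$ is odd and $\geq 3$ as required by (\ref{defMm}), while the remaining non-consecutiveness is inherited from $Q(n) \subset P(n)$. Under this bijection
\[
(1 + f_{I'}(\upvarphi))^{-M} = (\upvarphi^{0} + \upvarphi^{i_1'} + \cdots + \upvarphi^{i_k'})^{-M} = \upvarphi_{I}^{-M},
\]
so the two series agree term by term.

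Given these two identifications, the conclusion follows immediately by substitution into (\ref{limitoftheJs}): the numerator and denominator there are precisely $G_6(\upvarphi) + H_6(\upvarphi)$ and $G_4(\upvarphi) + H_4(\upvarphi)$, raised to the indicated powers. I do not anticipate a substantive obstacle; the only subtlety is that $G_M(\upvarphi)$ and $H_M(\upvarphi)$ must be read as honest convergent series rather than as merely formal generating functions, but this is legitimate under the standing hypothesis that $J^{\rm qt}(\upvarphi)$ converges (invoked already in Theorem~\ref{limitgoldenmean}), whose verification is deferred to the subsequent section.
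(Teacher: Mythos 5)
Your proposal is correct and follows the same route as the paper: the paper records the identities $G_M(\upvarphi) = \sum_{i\geq 1}\upvarphi^{-Mi} + \sum_{I\in\mathfrak{N}(1)}\upvarphi_I^{-M}$ and $H_M(\upvarphi)=\sum_{I\in\mathfrak{M}(0)}\upvarphi_I^{-M}$ and then declares the corollary "immediate" from (\ref{limitoftheJs}), which is precisely the term-by-term identification (singletons plus $\mathfrak{N}(1)$-tuples give $G_M$, and the prepend-a-zero bijection with $\bigcup_n Q(n)$ gives $H_M$) that you spell out.
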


 \begin{note} 
  If one replaces in the formula for $C_{x, M}(n)$ the weighting polynomial $ f_{I}(x )^{-M}$ by the equiweight $x^{-Mn}$ one recovers $c(n)$.  Thus the functions $G_{M}(x), H_{M}(x)$ may be viewed as weighted variants of the variable part of the Rogers-Ramanujan function.
  \end{note}

 \section{Convergence}\label{golden2}
 
In this section we will show that $j(\upvarphi )^{\rm qt}<\infty$.  As before we write
 $   j^{\rm qt}(\upvarphi ) := 12^{3}/( 1-J^{\rm qt}(\upvarphi )  ) $.  
 
\begin{theo}\label{goldenbound} $j(\upvarphi )^{\rm qt}$ converges with the bounds 
 \[   9150< j^{\rm qt}(\upvarphi ) <9840.  \]
 \end{theo}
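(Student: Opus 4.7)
The plan is to first establish convergence and then derive the explicit numerical bounds. For convergence, combine Corollary \ref{explicitformula} with Theorem \ref{limitgoldenmean}: it suffices to show that $G_M(\upvarphi)$ and $H_M(\upvarphi)$ converge absolutely for $M = 4, 6$. For any $I = (i_1,\dots,i_l) \in \mathfrak{N}(1)$, the sum $\upvarphi_I = \upvarphi^{i_1} + \cdots + \upvarphi^{i_l}$ is dominated by its largest term, so $\upvarphi_I^{-M} \leq \upvarphi^{-M i_l}$. Grouping by the largest part $i_l = n$, the count of admissible tuples with last part exactly $n$ is bounded by the number of gap-$\geq 2$ subsets of $\{1,\dots,n-1\}$, which is at most $F_n$. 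By Binet's formula, $F_n \leq C\upvarphi^n$, so the series is majorized by $\sum_n \upvarphi^{(1-M)n}$, which converges for $M \geq 2$. An identical argument handles $H_M(\upvarphi)$ (with the additional constraint that $i_1$ is odd only strengthening the decay).

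To obtain the numerical bounds, the strategy is to compute explicit truncations and bound the tails. Define
\[
G_M^{(N)}(\upvarphi) := \sum_{i=1}^{N} \upvarphi^{-Mi} + \sum_{\substack{I \in \mathfrak{N}(1)\\ i_l \leq N}} \upvarphi_I^{-M},
\]
and analogously $H_M^{(N)}(\upvarphi)$. The tail $G_M(\upvarphi) - G_M^{(N)}(\upvarphi)$ is controlled by the dominating series $\sum_{n>N} F_n \upvarphi^{-Mn}$, which admits a closed-form evaluation via Binet. For $M = 4$ the tail decays roughly like $\upvarphi^{-3N}$, and for $M = 6$ like $\upvarphi^{-5N}$, so a moderate truncation (e.g.\ $N \approx 15$) will give several orders of magnitude more precision than needed.

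Given two-sided bounds $L_M \leq G_M(\upvarphi) + H_M(\upvarphi) \leq U_M$ for $M = 4, 6$, one produces bounds on $J^{\rm qt}(\upvarphi)$ using Corollary \ref{explicitformula}: the numerator is sandwiched by $L_6^2$ and $U_6^2$ and the denominator by $L_4^3$ and $U_4^3$. Since $x \mapsto 12^3/(1-x)$ is monotone increasing on $(0,1)$, bounds on $j^{\rm qt}(\upvarphi)$ then follow, provided the computed bounds on $J^{\rm qt}(\upvarphi)$ stay safely inside $(0,1)$.

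The main obstacle is precision management rather than any deep analytic step: the target interval $(9150, 9840)$ is only about $7\%$ wide around the claimed value $9538$, corresponding via $j^{\rm qt} = 12^3/(1-J^{\rm qt})$ to roughly $0.811 < J^{\rm qt}(\upvarphi) < 0.824$, and errors are amplified by the cubing in the denominator and by the derivative of $12^3/(1-x)$ near $x \approx 0.82$. However, the exponential decay of the tails in powers of $\upvarphi^{-1}$ makes the partial-sum computation entirely routine, so the estimate closes with room to spare.
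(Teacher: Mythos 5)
Your proposal is correct, but it follows a genuinely different route from the paper's. The paper never separates the convergence question from the explicit bounding: it groups the sums $\sum_{I \in \mathfrak{N}(1)} \upvarphi_I^{-M}$ and $\sum_{I \in \mathfrak{M}(0)} \upvarphi_I^{-M}$ by the \emph{number of parts} $|I| = l$, extracts for $l=2$ a closed-form leading term (namely $\tfrac{1}{(\upvarphi^M-1)(\upvarphi^2+1)^M}$ and $\tfrac{1}{(1+\upvarphi^3)^M}$ respectively), and bounds the total contribution of each $l \geq 3$ by the term $(\upvarphi^{-M})^{l-1}/(\upvarphi^M-1)^l$ of a convergent geometric series; summing over $l$ yields fully explicit closed-form error bounds $\widetilde{C}(M)$, $\widetilde{D}(M)$, and substituting $M = 4, 6$ gives $0.8112 < J^{\rm qt}(\upvarphi) < 0.8244$ by hand, hence the stated bounds. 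You instead group by the \emph{largest part} $i_l = n$, bound $\upvarphi_I^{-M} \leq \upvarphi^{-Mn}$, and use the Fibonacci count of gap-$\geq 2$ tuples with fixed maximum to get a majorizing series $\sum_n F_n \upvarphi^{-Mn}$; this gives a cleaner and more conceptual standalone convergence argument (in the paper convergence is really a by-product of the explicit estimates, and absolute convergence is only implicit in the positivity of the terms). For the numerics you then truncate at a parameter $N$ and control the tail by $\sum_{n>N} F_n \upvarphi^{-Mn}$, which is sound and decays fast enough, but this replaces the paper's pencil-and-paper closed forms with a finite enumeration of admissible tuples. Each approach buys something: yours is modular, extends uniformly to any $M \geq 2$, and can be pushed to arbitrary precision mechanically; the paper's delivers symbolic bounds in $\upvarphi$ without any enumeration, at the cost of a less transparent organization.
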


 \begin{proof}  To prove the convergence of
 $j^{\rm qt}(\upvarphi )$, it is enough to prove convergence of the explicit formula $j^{\rm qt}(\upvarphi )$
 obtained from (\ref{limitoftheJs}).
 Observe first that
 \[ \sum_{i= 1}^{\infty}  \upvarphi^{-6i} = (\upvarphi^{6}-1)^{-1},\quad\quad  \sum_{i=1}^{\infty}  \upvarphi^{-4i} = (\upvarphi^{4}-1)^{-1}  \]
 so we may write
  \[    J^{\rm qt}(\upvarphi ) = \frac{49}{40}\frac{ \left( (\upvarphi^{6}-1)^{-1}  +   \sum_{I\in\mathfrak{N}(1)}   \upvarphi_{I}  ^{-6}  + 
 \sum_{ I\in \mathfrak{M}(0)} \upvarphi_{I}^{-6} \right)^{2} }
 { \left( (\upvarphi^{4}-1)^{-1} + \sum_{I\in\mathfrak{N}(1)}   \upvarphi_{I}  ^{-4}  + 
 \sum_{ I\in \mathfrak{M}(0)} \upvarphi_{I}^{-4} \right)^{3}}  .  \]
 We now find an explicit approximation and an upper bound for the sum  $\sum_{I\in\mathfrak{N}(1)}   \upvarphi_{I}^{-M} $
 where $M$ is a positive integer.  
 In fact, we will show that
 \begin{align}\label{goldennumerator1}
  \sum_{I\in\mathfrak{N}(1)}   \upvarphi_{I}^{-M}  = \frac{1}{(\upvarphi^{M}-1)(\upvarphi^{2}+1)^{M}} +C(M)  
  \end{align}
 where
 \begin{align}\label{goldennumeratorbound}
 C(M) < \widetilde{C}(M):=\frac{1}{\upvarphi^{2M}(\upvarphi^{M}-1)^{2}}   +\frac{1}{\upvarphi^{M}(\upvarphi^{M}-1)^{2}(\upvarphi^{2M}-\upvarphi^{M}-1)}. 
 \end{align}
 
 Consider first the sum of those $I$
 with $|I|=2$:
 \begin{align}
  \mathop{ \sum_{i_{1}\geq 1} }_ {i_{2}\geq i_{1}+2} \frac{1}{( \upvarphi^{i_{1}} + \upvarphi^{i_{2}}   )^{M}} & =
 \sum_{i=1}^{\infty} \upvarphi^{-Mi}\sum_{k=2}^{\infty} (1+ \upvarphi^{k})^{-M} \nonumber \\
  & = \frac{1}{\upvarphi^{M}-1}\left\{ \frac{1}{(1+\upvarphi^{2})^{M}} +\sum_{k=3}^{\infty} (1+ \upvarphi^{k})^{-M}  \right\} \label{goldenfirstorder}\\
& <  \frac{1}{\upvarphi^{M}-1}\left\{ \frac{1}{(1+\upvarphi^{2})^{M}}+\sum_{k=3}^{\infty} \upvarphi^{-Mk}  \right\}\nonumber \\
& = \frac{1}{(\upvarphi^{M}-1)(\upvarphi^{2}+1)^{M}} + \frac{1}{\upvarphi^{2M}(\upvarphi^{M}-1)^{2}} .\label{goldenfirstbound}
  \end{align}
  The equality (\ref{goldenfirstorder}) produces the explicit term 
  $1/((\upvarphi^{M}-1)(\upvarphi^{2}+1)^{M})$ appearing in (\ref{goldennumerator1}); the second term in (\ref{goldenfirstbound})
  is the first bounding term 
  in (\ref{goldennumeratorbound}).
  
  For $|I|=3$ we have
 \begin{align*} 
  \mathop{ \sum_{i_{1}\geq 1} }_ {i_{2}\geq i_{1}+2, i_{3}\geq i_{2}+2 } \frac{1}{( \upvarphi^{i_{1}} + \upvarphi^{i_{2}} + \upvarphi^{i_{3}} )^{M}}  &  
 =  \mathop{ \sum_{i_{1}\geq 1} }_ {i_{2}\geq i_{1}+2, i_{3}\geq i_{2}+2 } \upvarphi^{-Mi_{1}}\frac{1}{( 1 + \upvarphi^{i_{2}-i_{1}} + \upvarphi^{i_{3}-i_{1}} )^{M}} \\
 & <   \mathop{ \sum_{i_{1}\geq 1} }_ {i_{2}\geq i_{1}+2, i_{3}\geq i_{2}+2 } \upvarphi^{-Mi_{1}}\frac{1}{( \upvarphi^{i_{2}-i_{1}} + \upvarphi^{i_{3}-i_{1}} )^{M}} \\
 & = \mathop{ \sum_{i_{1}\geq 1} }_ {i_{2}\geq i_{1}+2, i_{3}\geq i_{2}+2 } \upvarphi^{-Mi_{1}}\upvarphi^{-M(i_{2}-i_{1})}\frac{1}{(1+ \upvarphi^{i_{3}-i_{2}} )^{M}} \\
& < \sum_{i\geq 1} \upvarphi^{-Mi}
 \sum_{j\geq 2}\upvarphi^{-Mj}\sum_{k\geq 2} \upvarphi^{-Mk} \\
 & = \frac{(\upvarphi^{-M})^{2}}{ (\upvarphi^{M}-1)^{3} }  .
 \end{align*}
 Inductively, for the terms with $|I|=l\geq 3$ we have the bound
 \[  \frac{(\upvarphi^{-M})^{l-1}}{ (\upvarphi^{M}-1)^{l} }.  \]
 Summing these bounds from $l=3$ to $\infty$ gives the second term in (\ref{goldennumeratorbound}):
 \[\sum_{l=3}^{\infty} \frac{(\upvarphi^{-M})^{l-1}}{ (\upvarphi^{M}-1)^{l} }
 = \upvarphi^{M}\sum_{l=3}^{\infty} \frac{1}{ (\upvarphi^{M}(\upvarphi^{M}-1))^{l} }
 =\frac{1}{\upvarphi^{M}(\upvarphi^{M}-1)^{2}(\upvarphi^{2M}-\upvarphi^{M}-1)}
 \]
 
 We now bound the second type of sum appearing in $J^{\rm qt}(\upvarphi)$, 
$ \sum_{ I\in \mathfrak{M}(0)} \upvarphi_{I}^{-M}  $.  We will show here that
\begin{align}
\sum_{ I\in \mathfrak{M}(0)} \upvarphi_{I}^{-M} = \frac{1}{(1+\upvarphi^{3})^{M}} + D(M)
\end{align}
where
\begin{align}
D(M) <\widetilde{D}(M):=\frac{1}{\upvarphi^{3M}(\upvarphi^{2M}-1)}  +   \frac{1}{\upvarphi^{M}(\upvarphi^{2M}-1)(\upvarphi^{2M}-\upvarphi^{M}-1)}
\end{align}

When $|I|=2$ we have, since $i_{1}=0$, that $i_{2}=2j+1$ is odd, where $j\geq 1$ (recall the definition
of $\mathfrak{M}(m)$ found in (\ref{defMm})).  For such $I$ we have the contribution
\begin{align}
  \mathop{ \sum_{i=2j+1} }_{j\geq 1}\frac{1}{(1+ \upvarphi^{i})^{M}} 
& = \frac{1}{(1+\upvarphi^{3})^{M}} + \sum_{j= 2}^{\infty} (1+\upvarphi^{(2j+1)} )^{-M} \\
& <  \frac{1}{(1+\upvarphi^{3})^{M}} +\sum_{j= 2}^{\infty} \upvarphi^{-M(2j+1)} \nonumber \\
& = \frac{1}{(1+\upvarphi^{3})^{M}} + \upvarphi^{-M}\sum_{j= 2}^{\infty}\upvarphi^{-2Mj} \nonumber \\
& = \frac{1}{(1+\upvarphi^{3})^{M}} + \upvarphi^{-5M}\frac{1}{1-\upvarphi^{-2M}} \nonumber \\
& = \frac{1}{(1+\upvarphi^{3})^{M}}+ \frac{1}{\upvarphi^{3M}(\upvarphi^{2M}-1)} .
 \end{align}

For $|I|=3$ we have
 \begin{align*} 
   \sum_ {j\geq 1, k\geq (2j+1)+2 } \frac{1}{( 1 + \upvarphi^{2j+1} + \upvarphi^{k} )^{M}}  &  
 < \sum_ {j\geq 1, k\geq (2j+1)+2 } \upvarphi^{-M(2j+1)}\frac{1}{( 1+ \upvarphi^{k-(2j+1)} )^{M}} \\
 &< \sum_{j=1}^{\infty}\upvarphi^{-M(2j+1)}\sum_{k=2}^{\infty}\upvarphi^{-Mk} \\
  & = \frac{1}{\upvarphi^{M}(\upvarphi^{2M}-1)}\cdot \frac{1}{\upvarphi^{M}(\upvarphi^{M}-1)} \\
  & = \frac{1}{\upvarphi^{M}+1}\cdot \left( \frac{\upvarphi^{-M}}{\upvarphi^{M}-1}\right)^{2} \\ 
 \end{align*}
For the sum over $I$ with $|I|=l$, we obtain inductively 
the bound
\[       \frac{1}{\upvarphi^{M}+1}\left( \frac{\upvarphi^{-M}}{\upvarphi^{M}-1}  \right)^{l-1}    \]
and summing these from $l=3$ to $\infty$ gives
\[   \frac{1}{\upvarphi^{M}(\upvarphi^{2M}-1)(\upvarphi^{2M}-\upvarphi^{M}-1)}. \]
It follows then that
\begin{align*}
  J^{\rm qt}(\upvarphi ) & <\frac{49}{40} \frac{\left( (\upvarphi^{6}-1)^{-1}  + \big( (\upvarphi^{6}-1)(\upvarphi^{2}+1)^{6}\big)^{-1}
+(1+\upvarphi^{3})^{-6} + \widetilde{C}(6) + \widetilde{D}(6) \right)^{2}}{ \left( (\upvarphi^{4}-1)^{-1}+\big( (\upvarphi^{4}-1)(\upvarphi^{2}+1)^{4}\big)^{-1}
+(1+\upvarphi^{3})^{-4}\right)^{3}} \\
& \\
& \approx 0.824376700276.
 \end{align*}
A lower bound may be given by 
\begin{align*}
  0.81115979990388 & \approx\frac{49}{40}\frac{\left( (\upvarphi^{6}-1)^{-1}  + \big( (\upvarphi^{6}-1)(\upvarphi^{2}+1)^{6}\big)^{-1}
+(1+\upvarphi^{3})^{-6} \right)^{2}}{ \left( (\upvarphi^{4}-1)^{-1}+\big( (\upvarphi^{4}-1)(\upvarphi^{2}+1)^{4}\big)^{-1}
+(1+\upvarphi^{3})^{-4}+ \widetilde{C}(4) + \widetilde{D}(4) \right)^{3}} \\
& \\
& <J^{\rm qt}(\upvarphi ) 
\end{align*}
which give the bounds presented in the statement of the theorem.
\end{proof}

\begin{note}  Using a program such as PARI one can calculate using the explicit formula of Corollary \ref{explicitformula}
that $j^{\rm qt}(\upvarphi )\approx 9538.249655644 $.
\end{note}

\bibliographystyle{amsalpha}

\end{document}